\documentclass[11pt, letterpaper]{amsart}
\usepackage[left=1in,right=1in,bottom=1in,top=1in]{geometry}
\usepackage{amsmath,amsthm,amssymb,mathtools}
\usepackage{calligra,mathrsfs}
\usepackage[all,cmtip]{xy}
\usepackage[noadjust]{cite}
\usepackage{hyperref}
\hypersetup{hidelinks}
\usepackage{enumerate}
\usepackage{comment}
\usepackage{graphicx}
\usepackage{enumitem} \setlist[enumerate]{label={\upshape(\roman*)}}

\usepackage[T2A,T1]{fontenc}
\usepackage[utf8]{inputenc}
\usepackage[russian,english]{babel}

\newcommand{\on}[1]{\operatorname{#1}}
\newcommand{\mathfont}{\mathbf}

\newcommand{\Z}{\mathfont Z}
\newcommand{\Q}{\mathfont Q}

\newcommand{\FF}{\mathfont F}

\DeclareFontFamily{OT1}{rsfs}{}
\DeclareFontShape{OT1}{rsfs}{n}{it}{<-> rsfs10}{}
\DeclareMathAlphabet{\mathscr}{OT1}{rsfs}{n}{it}

\newcommand{\Gscr}{\mathscr{G}}

\newcommand{\Gal}{\on{Gal}}

\newcommand{\ord}{\on{ord}}

\mathchardef\mhyphen="2D

\newcommand{\D}{\mathbf{D}}

\renewcommand{\O}{\mathcal{O}}
\newcommand{\E}{\mathcal{E}}
\newcommand{\F}{\mathcal{F}}

\theoremstyle{plain}
\newtheorem{lem}{Lemma}
\newtheorem{thm}[lem]{Theorem}

\newtheorem{cor}[lem]{Corollary}

\theoremstyle{definition}
\newtheorem{defn}[lem]{Definition}

\newtheorem{rem}[lem]{Remark}

\numberwithin{equation}{section}
\numberwithin{lem}{section}

\DeclareMathOperator{\HOM}{\mathscr{H}\text{\kern -3pt {\calligra\large om}}\,}

\renewcommand{\o}[1]{\overline{#1}}

\DeclareMathOperator{\NP}{NP}

\DeclareMathOperator{\crys}{crys}

\renewcommand*{\ll}{\mathrm{ll}}

\title{Unramified Geometric Iwasawa theory of function fields}
\date{\today}

\author[B. Cais]{Bryden Cais}
\address{Department of Mathematics, The University of Arizona,
Tucson, Arizona, USA}
\email{cais@arizona.edu}

\author[D. Lewis]{Daniel Lewis}
\address{Independent researcher, Manchester, UK}
\email{danlewis92@gmail.com}

\usepackage[usenames,dvipsnames]{color}

\makeatletter
\@namedef{subjclassname@2020}{%
  \textup{2020} Mathematics Subject Classification}
\makeatother

\begin{document} 

\CompileMatrices
\UseTips

\begin{abstract}
    For an algebraically closed field $k$ of characteristic $p>0$,
    we study unramified $\Z_p$-towers $\{X_n\}_{n\ge 0}$ of smooth,
    projective and connected curves over $k$.  
    Our main result provides an Iwasawa-theoretic
description of the Newton polygons of their Jacobians, establishing that the slopes of the new part at level $n$ are
 eventually affine functions of $p^{-n}$ with rational coefficients.    
 \end{abstract}
 
\keywords{Iwasawa theory, $p$-divisible groups, Newton polygons}
\subjclass[2020]{11R23, 
14L05,  	
11R58, 
14H30
}

\thanks{The first author was supported by NSF grant numbers DMS-1902005 and DMS-2302072.}

\maketitle

\section{Introduction}\label{intro}

Fix an algebraically closed field $k$ of characteristic $p>0$ and
let $X_0$ be a smooth, projective, and connected curve over $k$, with genus $g$ and $p$-rank $f>0$. 
By a theorem of Shafarevich 
 \cite[Theorem 1.9]{crew84},
the maximal pro-$p$ quotient of the \'etale fundamental group of $X_0$ is a free pro-$p$ group
on $f$ generators; in particular, this profinite group admits a continuous surjection onto $\Z_p$.
Fix any such surjection, and let $\{X_n\}_{n\ge 0}$ be the corresponding
$\Z_p$-{\em tower} 
of \'etale Galois covers $X_n\rightarrow X_0$ of curves
over $k$
with $\Gal(X_n/X_0)\simeq \Z/p^n\Z$, compatibly with change in $n$.
By the Riemann--Hurwitz and Deuring--Shafarevich formulae, $X_n$ has genus $g_n\coloneq p^n(g-1)+1$
and $p$-rank $f_n\coloneq p^n(f-1)+1$.

Let $W\coloneq W(k)$ be the ring of Witt vectors of $k$,  
and 
put $K_0\coloneq W[1/p]$.
In this paper, we will study the 
$F$-isocrystals 
$$D_n\coloneq K_0\otimes_W H^1_{\crys}(X_n/W)$$
with $F$ given by the crystalline $p$-power Frobenius map,
as $n\rightarrow \infty$.  By the Dieudonn\'e--Manin classification, 
$D_n$ is determined up to isomorphism as an $F$-isocrystal by its multiset of slopes; equivalently, its {\em Newton polygon}.
Moreover, one has a canonical decomposition $D_n = D_n^{\ord}\oplus D_n^{(0,1)}$
where $D_n^{\ord}$ has slopes 0 and 1, each with multiplicity $f_n$, and $D_n^{(0,1)}$ has all slopes in $(0,1)$.

To state our main result, let $\Lambda\coloneq W[\![T]\!]$ be the ring of power series over $W$ in one variable, 
write $\O_{\E}$ for the $p$-adic completion of $\Lambda[1/T]$ and set $\E\coloneq\O_{\E}[1/p]$;
this is a complete, discretely valued field with valuation ring $\O_{\E}$, uniformizer $p$, and residue field $k(\!(T)\!)$.  
For an $F$-isocrystal $D$, we write $\NP(D)$
for its corresponding Newton polygon. 
For a discretely valued field $\F$ with valuation
$v$ normalized by $v(\varpi)=1$ for a chosen $\varpi\in\F^\times$,
and $Q=\sum_{i=0}^r a_i Z^{i}\in \F[Z]$ with $a_r\neq 0$, 
we write $\NP_{\F}(Q)$ or $\NP_{\varpi}(Q)$ for the {\em Newton polygon}
of $Q$ with respect to $v$, {\em i.e.}~the lower convex hull of $\{(r-i,v(a_{i}))\ :\ 0\le i\le r,\ a_i\neq 0\}$.
A finite {\em direct sum} of Newton polygons is the Newton polygon
whose multiset of slopes is the disjoint union of the multisets of slopes of the summands.

\begin{thm}\label{thm:main-intro}
    Set $d\coloneq g-f$.  There exist $\delta\in W[T]$ with $\delta(\zeta-1)\neq 0$ for every $p$-power root of unity $\zeta\in \o{K}_0$,
    and a monic polynomial 
    $P$ of degree $2d$ with coefficients in $\Lambda[1/\delta]\cap \O_{\E} \subseteq \E$
    such that
    $$
        \NP(D_n^{(0,1)}) = \NP(D_{n-1}^{(0,1)}) \oplus \NP_p(P\big|_{T=\zeta-1})^{\oplus(p^n-p^{n-1})}
    $$
    for all $n \ge 1$, 
    where $\zeta$ is any choice of primitive $p^n$-th root of unity.
    Viewing $P$ as a polynomial with coefficients in $\E$,
    the ``generic'' Newton polygon $\NP_{\E}(P)$ has endpoints $(0,0)$ and $(2d,d)$,
    all slopes in $[0,1]$, and underlying
    multiset of slopes invariant under $s\mapsto 1-s$. Moreover, 
    $\NP_p(P\big|_{T=\zeta-1})$ lies on or above $\NP_{\E}(P)$
    with the same endpoints, and converges uniformly to $\NP_{\E}(P)$ as $n\rightarrow \infty$.
\end{thm}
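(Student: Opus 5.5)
Our approach is to pass to the limit of the tower and view the $p$-divisible groups of the Jacobians as a single module over the Iwasawa algebra $\Lambda$.

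\textbf{Step 1: the Iwasawa module.} Put $\Gamma\coloneq\Gal(X_\infty/X_0)\simeq\Z_p$ with topological generator $\gamma$, and identify $W[\![\Gamma]\!]\simeq\Lambda$ via $\gamma\mapsto 1+T$. Consider the inverse limit (under trace/pushforward)
$$\D\coloneq\varprojlim_n H^1_{\crys}(X_n/W),$$
or better its Dieudonn\'e-module incarnation $\varprojlim_n \mathbf{D}(J_n[p^\infty])$. We first show that $\D$ is a finite free $\Lambda$-module. The argument is Nakayama-type, the analogue of the Iwasawa-theoretic control theorem. Because $X_n\to X_0$ is étale with $p$-group Galois group, $H^1_{\crys}(X_n/W)$ is free over $W[\Gamma/p^n]$ up to a rank-two correction coming from $H^0$ and $H^2$. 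This is a crystalline Chevalley--Weil / Nakajima-type statement, proved via the Hochschild--Serre spectral sequence and the cohomological triviality of $H^1(X_n,\O)$ and of the de Rham pieces for étale $p$-covers. It also gives control: $\D_{\Gamma_n}\simeq H^1_{\crys}(X_n/W)$ modulo these corrections.

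Frobenius $F$ and Verschiebung $V$ commute with $\Gamma$, so they act $\Lambda$-linearly on $\D$; this uses that $\Gamma$ acts by $k$-automorphisms. We then split off the slope-$0$ and slope-$1$ parts using the étale/multiplicative decomposition $J_n[p^\infty]^{\mathrm{et}}$ and $J_n[p^\infty]^{\mathrm{mult}}$. These parts have $\Lambda$-ranks $f-1$ each and account for $D_n^{\ord}$, with the correction terms producing the extra $1$ in $f_n=p^n(f-1)+1$. What remains is the local-local part $\D^{\ll}$. Counting ranks via $g_n-f_n=p^n(g-f)$ shows that $\D^{\ll}$ is free of rank $2d$ over $\Lambda$. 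Moreover, $\D^{\ll}\otimes_{\Lambda,T\mapsto\zeta-1}\o{K}_0$ is exactly the $\zeta$-isotypic component of the new part of $D_n^{(0,1)}$. This step yields the recursive shape of the formula, with multiplicity $p^n-p^{n-1}$ coming from the Galois-conjugate primitive characters.

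\textbf{Step 2: the polynomial $P$.} $F$ is $\sigma$-semilinear over $\Lambda$, where $\sigma$ is Frobenius on $W$ and $T$ is fixed, since $\Gamma$-actions commute with $F$. A linearization is therefore needed. Since $k$ is algebraically closed, the slopes of the specialization at $\zeta-1$ are computed by the $\sigma$-linearized Frobenius, and $\sigma$ fixes $\zeta$ only after extending scalars. The cleanest way to handle this is to work with the matrix $A\in M_{2d}(\Lambda)$ of $F$ on $\D^{\ll}$ and use the Katz/Dieudonn\'e--Manin fact that, over an algebraically closed residue field, the Newton polygon of a $\sigma$-semilinear $F$ can be read off a cyclic vector. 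Concretely, we choose a cyclic vector for $F$ over the fraction field, or over $\Lambda[1/\delta]$ where $\delta$ is the determinant of the cyclic basis change. This presents $\D^{\ll}[1/\delta]$ as $\Lambda[1/\delta][F]/(P^\ast)$ for a twisted polynomial, and we let $P$ be the associated ordinary polynomial.

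We must then ensure $\delta(\zeta-1)\neq0$. Each specialization is nonzero at one point, and we use the freedom in choosing the cyclic vector: $\delta$ is a nonzero polynomial, after Weierstrass preparation, with finitely many zeros in the open disc. We perturb the vector to avoid all $\zeta-1$, which are countably many. Alternatively, we show the cyclic vector works at every $\zeta$ simultaneously by a genericity argument. Integrality of $P$ over $\O_\E$ follows because $F$ preserves a lattice over $\O_\E$.

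\textbf{Step 3: Newton polygon statements.} The properties of $\NP_\E(P)$ follow from its identification with the generic Newton polygon of the $F$-crystal $\D^{\ll}\otimes\O_\E$. Endpoints $(0,0)$ and $(2d,d)$ come from $FV=p$, which gives $\det F$ of valuation $d$. Slopes lie in $[0,1]$ because $F$ and $V$ preserve the lattice. Symmetry comes from the Poincaré/Weil pairing, which extends to $\D$ compatibly with $\Gamma$ via $\gamma\mapsto\gamma^{-1}$, an involution that preserves valuations. That the specialized polygon lies above the generic one with the same endpoints is Grothendieck specialization. Uniform convergence as $n\to\infty$ follows since $v_p(\zeta-1)=1/(p^{n-1}(p-1))\to0$. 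After Weierstrass preparation, each coefficient $a_i$ has $v_p(a_i(\zeta-1))$ equal to $v_\E(a_i)$ plus $\lambda_i/(p^{n-1}(p-1))$ for $n\gg0$, which is eventually affine in $p^{-n}$ and tends to the generic value.

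\textbf{Main obstacle.} I expect the main obstacle to be Step 1, namely establishing freeness and control of $\D^{\ll}$ over $\Lambda$ compatibly with $F$. A close second is the cyclic-vector construction with $\delta$ avoiding all $\zeta-1$.
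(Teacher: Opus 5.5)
Your overall architecture (a free Iwasawa--Dieudonn\'e module with control, a cyclic vector over a localization of $\Lambda$, and a comparison of specialized and generic polygons) is the paper's. However, the step that carries the paper's new content is missing its essential input.

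\textbf{Existence of cyclic vectors at each specialization.} To get $m$ with $\Delta(m)=\det(m,Fm,\ldots,F^{2d-1}m)$ nonvanishing at every $\zeta-1$, your perturbation/genericity argument needs, for each $n$, the bad locus $\{m\in M:\Delta(m)\in\Phi_n\Lambda\}$ to be a \emph{proper} subset of $M$. That is, you need some $y\in M$ whose image is a cyclic vector of $(M/\Phi_nM)[1/p]$ over $K_n$. You never produce such a $y$, and it is not automatic. Difference modules need not have cyclic vectors: if $\varphi$ acts trivially, a scalar $F$ on a $2$-dimensional space has none. Also, a cyclic vector over $\mathrm{Frac}(\Lambda)$ controls only all but finitely many $\zeta$.

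This is exactly where the paper uses that $k$ is algebraically closed. It adapts Kedlaya's cyclic vector theorem to difference fields containing some $\alpha=\varphi(\alpha)$ that is not a root of unity and for which $\varphi(x)=\alpha x$ is solvable. It then takes $\alpha=1+p$, which works because $x\mapsto\varphi(x)/x$ is surjective on $W^\times$. Given such $y$:
\begin{itemize}
\item density of the good locus follows because $\Delta(w+p^Nty)$ is a polynomial in $t$ with leading coefficient $p^{2dN}\Delta(y)\not\equiv0\bmod\Phi_n$;
\item openness follows from $p$-adic continuity;
\item Baire's theorem finishes the argument.
\end{itemize}
Your remark that a nonzero $\Delta(m)$ has finitely many zeros in the disc could replace Baire by finitely many perturbations, but only once these witnesses $y$ exist. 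The witnesses also supply the cyclic vector over $\mathrm{Frac}(\Lambda)$ that you merely assert.

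\textbf{The ``on or above'' claim.} Saying $\NP_p(P|_{T=\zeta-1})$ lies on or above $\NP_\E(P)$ ``by Grothendieck specialization'' is also unsupported. Since $\varphi(T)=T$ is not a Frobenius lift, $M$ is not an $F$-crystal over $k[\![T]\!]$. Moreover, $\varepsilon_\zeta$ lands in the ramified ring $\O_{K_n}$ with $\varphi$ fixing the uniformizer $\zeta-1$, so there is no theorem to cite directly.

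Nor can the inequality be read off from $P$. Writing $a_i=p^{\alpha_i}u_iH_i/G_i$, one has $\ord_p(a_i(\zeta-1))=\alpha_i+(\deg H_i-\deg G_i)/(p^{n-1}(p-1))$ for $n\gg0$. This is \emph{below} the Gauss valuation $v_\E(a_i)=\alpha_i$ whenever $\deg G_i>\deg H_i$.

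The paper instead compares Hodge polygons computed from the matrix of $F^m$ on $M$ itself. Its minors lie in $\Lambda$, where $\ord_p\circ\varepsilon_\zeta\ge v_\E$ does hold. It combines this with Katz's basic slope estimate, whose error $i(r_i-1)/m$ is independent of $n$, and with Kedlaya's limit $\frac1m H_{m,\E}\to\NP(M_\E)$. By contrast, your Weierstrass argument for uniform convergence is correct once $P$ exists, and is more direct than the paper's.

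\textbf{Smaller points.}
\begin{itemize}
\item $FV=p$ only gives $v(\det F)+v(\det V)=2d$, so the endpoint $(2d,d)$ must come from your symmetry argument.
\item Step 1 is \cite[Theorem B (i)]{ID}, which the paper simply cites, and your sketch does not prove it. Rank counting does not give freeness. Also, $H^1(X_n,\O_{X_n})$ is not cohomologically trivial over $k[\Z/p^n\Z]$, since its dimension $p^n(g-1)+1$ is not divisible by $p^n$.
\end{itemize}
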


\begin{rem}\label{rem:delta}
    Note that the specialization $P\big|_{T=\zeta-1}$ makes sense
    as a polynomial with coefficients in $K_n\coloneq K_0(\mu_{p^n})$ as the image of $\delta$
    under the map $\Lambda \rightarrow K_n$ sending $T$
    to $\zeta-1$ is {\em nonzero} for $n\ge 1$.  
    While neither $\delta$ nor $P$
    are canonically determined by our fixed $\Z_p$-tower,
    the Newton polygons $\NP_{\E}(P)$ and $\NP_p(P\big|_{T=\zeta-1})$ 
    depend only on the tower, and it is possible to formulate
    much of Theorem \ref{thm:main-intro} intrinsically (see Theorem \ref{thm:NPabstract} below).  
    However, the more explicit
    Corollary \ref{cor:main} below requires the full strength of Theorem \ref{thm:main-intro} as stated.
\end{rem}

The Newton polygon $\NP_p(P\big|_{T=\zeta-1})$
can be described explicitly when $n\gg0$ as follows.  Let 
$$
s_1\le s_2 \le \cdots \le s_{d} \le 1-s_{d}\le 1-s_{d-1}\le\ldots\le 1-s_{1}
$$
be the (not necessarily distinct) slopes of the generic Newton polygon $\NP_{\E}(P)$.

\begin{cor}\label{cor:main}
There exist $b_1,\ldots ,b_d \in \Q$ such that
the multiset of slopes of $\NP_p(P\big|_{T=\zeta-1})$
is 
$$
    \left\{s_i + \frac{b_i}{p^{n-1}(p-1)},1-s_{i} - \frac{b_{i}}{p^{n-1}(p-1)}\ \Big|\ i=1,\ldots,d\right\}
$$
for all $n\gg0$.
We have $\sum_{j=1}^i b_j \ge 0$ for $1\le i \le d$  and if $s_i=s_{i+1}$ for some $i<d$, then $b_i \le b_{i+1}$.
\end{cor}

\begin{rem}
    In the special case of a $\Z_p$-tower $\{X_n\}_{n\ge 0}$ over $k=\overline{\FF}_p$, 
    each curve $X_n$ is defined over some {\em finite} field $k_n$.  Writing $h_n\coloneq \log_p|k_n|$,
    the map $F^{h_n}$ is then a $W(k_n)$-{\em linear} endomorphism of $H^1_{\crys}(X_n/W(k_n))$,
    and we may form its characteristic polynomial
$$
        \mathcal{C}(X_n/k_n, s) \coloneq \det\left(s-F^{h_n}\big|{H^1_{\crys}(X_n/W(k_n))}\right). 
$$
    By \cite{KatzMessing}, this monic polynomial of degree $2g_n$ has integer coefficients, 
    and its reciprocal roots are precisely the roots of the Hasse--Weil zeta function of $X_n$ over $k_n$.
    It follows from \cite[\S1.3]{Katz} (see also \cite[\S14.4--\S14.5]{Kedlaya}) that the Newton polygon of the $F$-isocrystal $D_n$
    coincides with the vertically scaled $p$-adic Newton polygon $h_n^{-1}\NP_p(\mathcal{C}(X_n/k_n,s))$. 
    In particular, these {\em normalized} $p$-adic Newton polygons 
    are independent of the choice of finite field $k_n$ over which $X_n$ is defined, and our work 
    provides an Iwasawa-theoretic description of them as $n\rightarrow\infty$.  In this way, 
    we establish the {\em unramified} analogue of Wan's program to understand $p$-adic Newton
    slopes of Hasse--Weil $L$-functions in {\em ramified} $\Z_p$-towers of curves over a (fixed) finite field (\cite{WanSlopes2}, \cite{Xiang}, \cite{KostersZhu}, \cite{KMU1,KMU2}; see also \cite{rwxy} and \cite{RenSlopes}).
    Note that, by class field theory, a global function field in one variable over a {\em finite}
    field admits no everywhere unramified geometric $\Z_p$-extensions whatsoever, so working 
    over infinite $k$ is essential in our context. 
    An important feature of our work is that it applies over {\em any} algebraically closed $k$
    of characteristic $p$, so it permits interesting questions about 
    the Iwasawa-theoretic behavior of slopes in the context of (unramified) $\Z_p$-extensions of
    {\em generic} curves of genus $g$ and $p$-rank $f$ (although we do not pursue this here).
\end{rem}

The starting point for our work is \cite[Theorem B (i)]{ID} (Theorem \ref{IDmain} below), 
which provides a finite, free, and self-dual Frobenius module $M$ over the Iwasawa algebra 
$\Lambda\coloneq W[\![\Z_p]\!]\simeq W[\![T]\!]$
interpolating all of the isocrystals $D_n^{(0,1)}$.  
It follows formally from this that the
quotient $F$-isocrystal $D_{n}^{(0,1)}/D_{n-1}^{(0,1)}$
decomposes, after extending scalars to $K_n$, as the direct sum over
all primitive $p^n$-th roots of unity $\zeta$ of the $F$-isocrystals $M_{\zeta}\coloneq K_n\otimes_{\Lambda} M$
arising from $M$ by specializing $T\mapsto \zeta-1$; see Corollary \ref{cor:directsum}.
Crucially using the hypothesis that $k$ is algebraically closed, 
we prove that every $M_{\zeta}$ admits a {\em cyclic vector}
(see \S\ref{sec:cyclic}), and deduce from this (via the Baire category theorem) that $M$ itself admits a cyclic
vector after an appropriate localization; see Theorem \ref{cyclic}.  
Theorem \ref{thm:main-intro} and Corollary \ref{cor:main} are then straightforward consequences
of Kedlaya's theory \cite{Kedlaya} of {\em difference modules} over general {\em difference rings},
and a variant of the Grothendieck--Katz semicontinuity theorem (see Theorem \ref{thm:NPabstract}).

\subsection*{Acknowledgements}

We are grateful to Kiran Kedlaya for several helpful discussions and suggestions.
This project originated with the 2024 Ph.D.~thesis of the second author \cite{Lewis}, which established a preliminary version of Theorem \ref{thm:main-intro} in the case $d=1$, and for general $d$ under additional hypotheses (established by Theorem \ref{cyclic} below).

\subsection*{Tool and computational resource disclosure} 

Preliminary versions of the main results of this paper were discovered and proved without
computer assistance.  Discussions with ChatGPT helped us simplify several proofs and strengthen
the statements of Theorem \ref{thm:main-intro} and Corollary \ref{cor:main}.  We used ChatGPT to proofread the manuscript 
and suggest improvements to the exposition.

\section{Iwasawa--Dieudonn\'e modules}

We continue with the setup and notation of \S\ref{intro}, and throughout this and the following section
use the terminology of \cite{Kedlaya}.
Let $\Gscr_n\coloneq J_{X_n}[p^{\infty}]$ be
the $p$-divisible group of the Jacobian of $X_n$ over $k$.  
By \cite{MM}, writing $\D(\cdot)$ for the contravariant
Dieudonn\'e module functor, 
we have a canonical isomorphism
$\D(\Gscr_n)\simeq H^1_{\crys}(X_n/W)$ of $F$-crystals over 
$W$. 

\begin{defn}
    We write $M_n\coloneq \D(\Gscr_n^{\ll})$ for the (contravariant) Dieudonn\'e module 
    of the local--local component $\Gscr_n^{\ll}$ of $\Gscr_n$; 
    we have $M_n[1/p]\simeq D_n^{(0,1)}$ as $F$-isocrystals over $K_0$.
\end{defn}

For $n\ge 1$, pullback of line bundles along the natural maps $\pi:X_n\rightarrow X_{n-1}$
induces maps of Jacobians, and hence of $p$-divisible groups $\pi^*:\Gscr_{n-1}\rightarrow \Gscr_n$
over $k$.  Passing to local--local parts and applying the contravariant Dieudonn\'e module functor
yields maps of Dieudonn\'e modules
$\tau: M_n\rightarrow M_{n-1} $
and thus a projective system $\{M_n,\tau\}$.  This projective system admits an action of the Iwasawa algebra 
$\Lambda\coloneq W[\![\Z_p]\!] \simeq W[\![T]\!]$, wherein we make the usual
identification of $W$-algebras taking a fixed choice of topological generator of $\Z_p$ to $1+T$.
We write $\varphi$ for the unique automorphism of $W$ lifting the $p$-power map on $k$,
and continuously extend $\varphi$ to $\Lambda$ by $\varphi(T)=T$; this
makes $\Lambda$ into an {\em inversive difference ring} in the sense of \cite[\S14]{Kedlaya}.
For $n\ge 0$ we define 
$$\Lambda_n\coloneq W[\Z/p^n\Z] \simeq W[\![T]\!]/(\rho_n), \quad\text{where}\quad \rho_n=(1+T)^{p^n}-1.$$

\begin{defn}
    The {\em local--local  Iwasawa--Dieudonn\'e module} associated to $\{X_n\}_{n\ge 0}$ is 
    $$
        M\coloneq \varprojlim_{n,\tau} M_n.
    $$
    It is naturally a $\Lambda$-module with additive maps $F$ and $V$ that are $\Lambda$-semilinear 
    over $\varphi$ and $\varphi^{-1},$ respectively, and satisfy $FV=VF=p$.
    In this way, the pair $(M,F)$ is a difference module over $\Lambda$.
\end{defn}

\begin{thm}\label{IDmain}
    The $\Lambda$-module $M$ is free of rank $2d$ for $d\coloneq g-f$.  For $n\ge 0$, projection induces an isomorphism of 
    $\Lambda_n$-modules $\Lambda_n\otimes_{\Lambda} M \simeq M_n$ with action of $F$ and $V$.  
\end{thm}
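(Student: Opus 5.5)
Theorem \ref{IDmain} is attributed to \cite[Theorem B (i)]{ID}. The plan below reconstructs its proof with a control-theorem argument and Nakayama's lemma over the local ring $\Lambda$.

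\textbf{Step 1: control of the transition maps.} For $n\ge 1$ we want $\tau:M_n\to M_{n-1}$ to be surjective with kernel exactly $\rho_{n-1}M_n$. The cover $\pi:X_n\to X_{n-1}$ is étale Galois with group $\Z/p\Z$. Write $\gamma$ for the generator, so that $\gamma-1=T$ acts on $M_n$. Dually there is a norm (pushforward) map $\pi_*:\Gscr_n\to\Gscr_{n-1}$ with $\pi_*\pi^*=p$ and $\pi^*\pi_*=\sum_{i}\gamma^{i}$. One first shows that $\pi^*$ on local--local parts has kernel $0$ and, dually, that $\tau$ is surjective. For this we use the descent description of $\Gscr_{n-1}^{\ll}$ as $\Z/p\Z$-invariants of $\Gscr_n^{\ll}$. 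The key input is that an étale $\Z/p$-cover only affects the étale and multiplicative parts: the kernel of $\pi^*$ on Jacobians is $\Z/p\Z$, which is étale, so $(\Gscr_n^{\ll})^{\langle\gamma\rangle}=\pi^*\Gscr_{n-1}^{\ll}$ on the local--local part.

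Taking Dieudonné modules then gives $M_{n-1}\simeq M_n/(\gamma-1)$-type coinvariants twisted by the norm, and more precisely $M_{n-1}\simeq M_n/\rho_{n-1}M_n$ after iterating. Comparing ranks pins this down. The Deuring--Shafarevich and Riemann--Hurwitz formulas give $\operatorname{rank}_W M_n=2(g_n-f_n)=2p^n d$. On the other hand, $M_n/\rho_{n-1}M_n$ surjects onto $M_{n-1}$, which has rank $2p^{n-1}d$.

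\textbf{Step 2: freeness of $M_n$ over $\Lambda_n$.} We want $M_n$ free of rank $2d$ over $\Lambda_n=W[\Z/p^n]$. By Nakayama over the local ring $\Lambda_n$, $M_n$ is generated by $\dim_k M_n/(p,T)M_n$ elements. Identifying $M_n/TM_n$ with $M_0$ via Step 1, this number is $2d$. This gives a surjection $\Lambda_n^{2d}\to M_n$ between $W$-free modules of the same rank $2dp^n$. Such a surjection is an isomorphism.

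\textbf{Step 3: passage to the limit.} The system $\{\Lambda_n^{2d}\}$ is compatible. To make it so, choose lifts of a basis of $M_0$ successively, using surjectivity of $\tau$, so that the bases are compatible. Then $M=\varprojlim M_n\simeq\varprojlim\Lambda_n^{2d}=\Lambda^{2d}$, and the projections give $\Lambda_n\otimes_\Lambda M\simeq M_n$. These maps are $F$- and $V$-equivariant, because $\tau$ is a map of Dieudonné modules and $\varphi(\rho_n)=\rho_n$.

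\textbf{Main obstacle.} The hard part is Step 1, namely identifying $\ker\tau=\rho_{n-1}M_n$ exactly, i.e.\ showing that the $T$-coinvariants of $M_n$ give $M_0$ with no torsion discrepancy. This requires a careful analysis of $\pi^*$ and $\pi_*$ on the connected--étale and local--local decompositions. I would try first to show that $\Gscr_n^{\ll}$ is, as a $\Z/p^n$-equivariant object, induced-like: its Dieudonné module should be cohomologically trivial for the group. I would deduce this from the vanishing of the Tate cohomology of $\Z/p$ on $H^1$ of the structure sheaf's nilpotent part, which holds because étale covers have a normal basis. Once cohomological triviality is known, the norm/coinvariant identification, and hence Step 1, follows from the standard lemma that a finite $W[G]$-module that is cohomologically trivial and $W$-free is $W[G]$-projective, hence free.
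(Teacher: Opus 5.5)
The paper gives no argument here beyond citing \cite[Theorem B (i)]{ID}. Your reconstruction therefore has to carry the whole content itself, and it does not. The decisive step is the integral control statement $\ker\tau=\rho_{n-1}M_n$, equivalently freeness of $M_n$ over $W[\Z/p^n\Z]$, and your Step 1 only asserts it.

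Your Steps 2 and 3 are fine once control is known, but ``comparing ranks'' cannot supply it. The surjection $M_n/\rho_{n-1}M_n\twoheadrightarrow M_{n-1}$, together with the $W$-ranks $2p^nd$ and $2p^{n-1}d$, says nothing about whether this map has a kernel. Any nonzero kernel of $M_n/TM_n\twoheadrightarrow M_0$ makes $\dim_k M_n/(p,T)M_n>2d$ and breaks your Nakayama count.

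The descent claim $(\Gscr_n^{\ll})^{\langle\gamma\rangle}=\pi^*\Gscr_{n-1}^{\ll}$ is also unproved. Even granting it, the scheme-theoretic fixed points of a $p$-divisible group need not be $p$-divisible, so passing integrally to coinvariants of Dieudonn\'e modules is exactly where a torsion discrepancy could hide.

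Two smaller errors:
\begin{enumerate}
\item The kernel of $\pi^*$ on Jacobians is the Cartier dual $\mu_p$ of the Galois group, not $\Z/p\Z$. It is multiplicative, so your conclusion that it misses the local--local part survives, but for a different reason than the one you give.
\item The generator of $\Gal(X_n/X_{n-1})$ acts on $M_n$ through $1+\rho_{n-1}$, not $1+T$.
\end{enumerate}

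Your fallback plan is the right kind of idea. If you show $M_n$ is free over $W[\Z/p^n\Z]$ (necessarily of rank $2d$), then $M_n/\rho_{n-1}M_n$ is $W$-free of rank $2p^{n-1}d$. Since $\tau$ kills $\rho_{n-1}M_n$ and is surjective, only then does the rank comparison give $\ker\tau=\rho_{n-1}M_n$.

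However, your sketch of that freeness is not correct as stated. The normal basis theorem only makes $R\Gamma(X_n,\O_{X_n})$ a perfect complex of $k[G]$-modules. $H^1(X_n,\O_{X_n})$ itself is not cohomologically trivial: its Tate cohomology is that of $H^0=k$ shifted by two. To get the statement you want, you would have to:
\begin{enumerate}
\item split off the semisimple part $H^1_{\mathrm{et}}(X_n,\FF_p)\otimes k$ via the Artin--Schreier sequence, and compare Tate cohomology to see that the Frobenius-nilpotent part is $k[G]$-free;
\item relate that part to $M_n/FM_n$ and $M_n/VM_n$, getting freeness of both graded pieces of $M_n/pM_n$ (e.g.\ by duality for the second);
\item lift from $k[G]$-freeness of $M_n/pM_n$ to $W[G]$-freeness of $M_n$.
\end{enumerate}
None of this is carried out, and it is precisely the substance of the cited result.
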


\begin{proof}
    This is \cite[Theorem B (i)]{ID}.
\end{proof}

As in Remark \ref{rem:delta}, we put $K_n\coloneq K_0(\mu_{p^n})$, and
we extend the automorphism $\varphi$ of $K_0$ to $K_n$ by acting trivially on $\mu_{p^n}$,
which makes $K_n$ into a {\em difference field}.
For any $p^n$-th root of unity $\zeta$ in $K_n$, we write $\varepsilon_{\zeta} : \Lambda \rightarrow K_n$
for the $W$-algebra map taking $T$ to $\zeta-1$; this is a map of difference rings, so the
scalar extension $M_{\zeta}\coloneq K_n\otimes_{\Lambda,\varepsilon_{\zeta}} M$ is naturally a difference
module over $K_n$.
Albanese functoriality of the Jacobian induces maps of $p$-divisible groups $\pi_*: \Gscr_{n}\rightarrow \Gscr_{n-1}$,
and hence of local--local Dieudonn\'e modules $\sigma: M_{n-1}\rightarrow M_n$.  
Setting $\rho_{-1}\coloneq 1$, for $n\ge 0$ we write $\Phi_n\coloneq \rho_n/\rho_{n-1}\in \Z[T]$ for the $p^n$-th cyclotomic polynomial evaluated at $1+T$.

\begin{cor}\label{cor:directsum}
        For $n\ge 1$, the map $\sigma: M_{n-1}\rightarrow M_n$ is an inclusion of Dieudonn\'e modules.
        It induces a canonical direct sum decomposition of difference modules over $K_n$
        \begin{equation}\label{eq:directsum}
            K_n\otimes_{K_0} D_n^{(0,1)}  \simeq \left(K_n\otimes_{K_0}D_{n-1}^{(0,1)}\right)\oplus \bigoplus_{\zeta: \Phi_n(\zeta-1)=0} 
            M_{\zeta}.
    \end{equation}
\end{cor}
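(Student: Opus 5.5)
The plan is to deduce everything from Theorem \ref{IDmain} by commutative algebra over $\Lambda$. The first task is to identify $\sigma$ with multiplication by $\Phi_n$. The composite $\tau\circ\sigma = (\pi^*\circ\pi_*)^\vee$ comes from $\pi^*\pi_* = \sum_{g\in\Gal(X_n/X_{n-1})} g$ on the Jacobian. Hence, under Theorem \ref{IDmain}, $M_n\simeq \Lambda_n\otimes_\Lambda M$ and $M_{n-1}\simeq\Lambda_{n-1}\otimes_\Lambda M$, and $\sigma$ should correspond to the map
$$\Lambda_{n-1}\otimes_\Lambda M \rightarrow \Lambda_n\otimes_\Lambda M,\qquad \bar m\mapsto \overline{\nu_n m},$$
where $\nu_n = \rho_n/\rho_{n-1} = \Phi_n$ is the norm element of $\Z/p^n\Z$ relative to $p^{n-1}\Z/p^n\Z$. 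This map is well defined because $\Phi_n\rho_{n-1}=\rho_n$.

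To justify this identification, I would check the following facts.
\begin{enumerate}
\item $\sigma$ is $\Lambda$-linear, since it is Galois-equivariant.
\item $\tau\circ\sigma$ is multiplication by $\Phi_n$ on $M_{n-1}$, and $\sigma\circ\tau$ is multiplication by $\Phi_n$ on $M_n$; this follows from $\pi_*\pi^*=p$ and $\pi^*\pi_*=$ norm.
\item $\tau$ is the natural surjection $\Lambda_n\otimes M\to\Lambda_{n-1}\otimes M$.
\end{enumerate}
Since $\tau$ is surjective, $\sigma\circ\tau = \Phi_n\cdot$ determines $\sigma$ uniquely, and it is the displayed map.

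Injectivity then follows because $M$ is free. In $\Lambda_n\simeq W[T]/(\rho_n)$, the annihilator of $\Phi_n$ is exactly $(\rho_{n-1})$, since $\rho_n=\Phi_n\rho_{n-1}$, $\Lambda$ is a UFD, and $\Phi_n,\rho_{n-1}$ are coprime. So $\Phi_n:\Lambda_{n-1}\to\Lambda_n$ is injective, and tensoring with the free module $M$ preserves injectivity. Compatibility with $F$ and $V$ is automatic: $\sigma$ comes from a map of $p$-divisible groups, and $\varphi(\Phi_n)=\Phi_n$. So $\sigma$ is an inclusion of Dieudonné modules.

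For the decomposition, I would invert $p$ and extend scalars to $K_n$. Over $K_0$, the polynomials $\rho_{n-1}$ and $\Phi_n$ are coprime, so the Chinese remainder theorem gives
$$\Lambda_n[1/p]\simeq \Lambda_{n-1}[1/p]\times K_0[T]/(\Phi_n).$$
The image of $\sigma$ is $\Phi_n\Lambda_n[1/p]\otimes M$, which is exactly the first factor, because $\Phi_n$ is a unit times the idempotent there. Tensoring with $K_n$ splits $K_n[T]/(\Phi_n)\simeq\prod_{\zeta}K_n$ via the maps $\varepsilon_\zeta$, since $\Phi_n$ is separable with roots $\zeta-1$. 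This yields \eqref{eq:directsum} after tensoring with $M$ and using $M_n[1/p]\simeq D_n^{(0,1)}$.

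Each of these decompositions respects $F$, because $\varphi$ fixes $T$ and the CRT idempotents lie in $\Q[T]$, so they are $\varphi$-invariant; the factors are therefore difference submodules. Canonicity follows because the idempotents are intrinsic. The main obstacle is the middle step: carefully verifying that the maps $\tau$ and $\sigma$ defined via Dieudonné theory correspond to the natural surjection and to multiplication by $\Phi_n$ under the isomorphism of Theorem \ref{IDmain}. This rests on the identities $\pi^*\pi_*=\sum g$ and $\pi_*\pi^*=p$, together with contravariance.
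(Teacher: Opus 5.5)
Your proof is correct, but it takes a different route from the paper's.

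\textbf{The paper's route.} The paper never identifies $\sigma$ explicitly. It uses only $\pi_*\pi^*=p$, i.e.\ $\tau\circ\sigma=p$ on $M_{n-1}$. This gives injectivity of $\sigma$ at once, since $M_{n-1}$ is $p$-torsion free. It also shows that $p^{-1}\sigma[1/p]$ is a section of $\tau[1/p]$, whence $D_n^{(0,1)}\simeq D_{n-1}^{(0,1)}\oplus\ker(\tau[1/p])$. The paper then uses Theorem~\ref{IDmain} to write $\ker\tau=\rho_{n-1}M/\rho_nM$, identifies this with $M/\Phi_nM$ via multiplication by $\rho_{n-1}$, and applies CRT only to $K_n\otimes_{K_0}K_0[T]/(\Phi_n)$.

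\textbf{Your route.} You bring in the second identity $\pi^*\pi_*=\sum_g g$ to pin $\sigma$ down as multiplication by the norm element $\Phi_n$. You then read the splitting off the CRT idempotents of $K_0[T]/(\rho_n)$.

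\textbf{The two decompositions agree.} Your $\Phi_n$-component of $M_n[1/p]$ equals $\rho_{n-1}M_n[1/p]=\ker(\tau[1/p])$, because $\rho_{n-1}$ is a unit modulo $\Phi_n$ over $K_0$.

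\textbf{What each route buys.} The paper's argument is more economical. It needs neither the norm identity nor the check that the Galois action entering $\pi^*\pi_*$ matches the $\Lambda$-structure on $M_n$, which is exactly the step you flag as the main obstacle. That check does go through, and convention issues (pullback versus pushforward, $g$ versus $g^{-1}$) disappear because the norm element of a subgroup is invariant under inversion. Your argument gives an explicit formula for $\sigma$. It also makes canonicity transparent, since the decomposition is just the idempotent decomposition of $\Lambda_n[1/p]$ acting on $M_n[1/p]$.

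\textbf{One small imprecision.} The idempotents splitting $K_n[T]/(\Phi_n)\simeq\prod_\zeta K_n$ lie in $\Q(\mu_{p^n})[T]$, not $\Q[T]$. They are still $\varphi$-fixed, because $\varphi$ acts trivially on $\mu_{p^n}$; equivalently, each $\varepsilon_\zeta$ is a map of difference rings.
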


\begin{proof}
        The composition $\pi_*\pi^*$
is multiplication by $p$ on $\Gscr_{n-1}$, so the map $\sigma$ is an inclusion of Dieudonn\'e modules 
$M_{n-1}\hookrightarrow M_n$, and the map $p^{-1} \sigma[1/p] : M_{n-1}[1/p] \rightarrow M_{n}[1/p]$
is a section of the projection $\tau[1/p]: M_n[1/p] \rightarrow M_{n-1}[1/p]$.  This yields an isomorphism of
$F$-isocrystals $D_n^{(0,1)} \simeq D_{n-1}^{(0,1)} \oplus \ker (\tau[1/p])$ over $K_0$.
On the other hand, Theorem \ref{IDmain} gives identifications
$$
    \ker(\tau) \simeq \rho_{n-1} M / \rho_n M = \rho_{n-1} M / \rho_{n-1} \Phi_n M  \simeq M/\Phi_nM 
$$
wherein the final isomorphism is induced by multiplication by $\rho_{n-1}$ on $M$. 
As $\rho_{n-1}$ is fixed by $\varphi$, these are identifications of Dieudonn\'e modules.  Inverting $p$
and using the fact that the formation of kernels commutes with localization yields a canonical
identification of $F$-isocrystals over $K_0$
\begin{equation}
                D_n^{(0,1)} \simeq D_{n-1}^{(0,1)} \oplus (M/\Phi_n M)[1/p].\label{eq:isocdecomp}
\end{equation}
Extending scalars to $K_n$, the identification
\eqref{eq:directsum} follows from the Chinese Remainder Theorem.
\end{proof}

The automorphism $\varphi$ of $\Lambda$ extends uniquely and continuously to $\E$, making it an inversive difference field satisfying \cite[Hypothesis 14.4.1]{Kedlaya}.
The scalar extension $M_{\E}\coloneq \E\otimes_{\Lambda}M$
is then a finite dualizable difference module over $\E$ \cite[Definition 14.1.4]{Kedlaya}
of $\E$-dimension $2d$ for $d=g-f$,
and we write $\NP(M_{\E})$ for its Newton polygon 
\cite[Definition 14.5.2]{Kedlaya}.  Likewise, for each primitive $p^n$-th root of unity $\zeta$, the scalar extension 
$M_{\zeta}\coloneq K_n\otimes_{\Lambda,\varepsilon_{\zeta}}M$
is a finite dualizable difference module over the difference field $K_n$, and we write $\NP(M_{\zeta})$
for its Newton polygon.

\begin{thm}\label{thm:NPabstract}
    The Newton polygon $\NP(M_{\zeta})$
    is independent of the choice of primitive $p^n$-th root of unity $\zeta$.
    It lies on or above $\NP(M_{\E})$, with the same endpoints, and converges
    uniformly to $\NP(M_{\E})$ as $n\rightarrow \infty$.
    The ``generic'' Newton polygon $\NP(M_{\E})$ has endpoints $(0,0)$ and $(2d,d)$, with all slopes in $[0,1]$ and 
    underlying multiset of slopes stable under $s\mapsto 1-s$.
\end{thm}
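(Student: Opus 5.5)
Four things need proof: independence of $\zeta$, the shape of the generic polygon, the fact that $\NP(M_{\zeta})$ lies on or above $\NP(M_{\E})$ with the same endpoints, and uniform convergence. The approach is to pick a $\Lambda$-basis of $M$ (Theorem \ref{IDmain}) and let $A\in \mathrm{GL}_{2d}(\Lambda[1/p])\cap \mathrm{M}_{2d}(\Lambda)$ be the matrix of $F$ in it. Invertibility after inverting $p$ follows from $FV=VF=p$. Both $\NP(M_{\E})$ and $\NP(M_{\zeta})$ are then computed from the same matrix $A$, over different difference fields. I will use the description in \cite[\S14.5]{Kedlaya} of the Newton polygon via the ``Hodge polygons'' of iterates. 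Writing $A_m\coloneq A\varphi(A)\cdots\varphi^{m-1}(A)$ for the matrix of $F^m$, the Newton polygon is the limit of $m^{-1}\cdot\mathrm{HP}(A_m)$ in the sense of Kedlaya's slope filtration theory. Here $\mathrm{HP}$ is the polygon of elementary divisors (or of minors).

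\emph{Step 1: independence of $\zeta$.} The primitive $p^n$-th roots of unity are permuted by $\Gal(K_n/K_0)$. This group commutes with $\varphi$ on $K_n$, since $\varphi$ acts trivially on $\mu_{p^n}$. Moreover $\varepsilon_{\zeta^a}=g_a\circ\varepsilon_{\zeta}$ for the corresponding automorphism $g_a$. Hence $M_{\zeta^a}\simeq g_a^*M_{\zeta}$ as difference modules, and $g_a$ preserves the valuation, so the Newton polygons agree. Alternatively, by Corollary \ref{cor:directsum}, $(M/\Phi_nM)[1/p]$ is an isocrystal over $K_0$ whose base change is $\bigoplus_\zeta M_\zeta$, and Galois descent gives the same conclusion.

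\emph{Step 2: the generic polygon.} The generic polygon has the asserted shape. Its slopes lie in $[0,1]$ because $F$ preserves the lattice $M_{\E,0}\coloneq \O_{\E}\otimes M$ and $V=pF^{-1}$ does too, so all slopes are $\ge 0$ and $\le 1$. The self-duality of $M$ from \cite{ID}, under which $F$ is compatible with the Weil pairing up to multiplication by $p$, gives $M_{\E}\simeq M_{\E}^\vee(-1)$. This yields the symmetry $s\mapsto 1-s$, and the endpoint is then $(2d,d)$. If self-duality is unavailable as stated, I would argue instead by specialization: every $M_\zeta$ has these properties via $D_n^{(0,1)}$, which is symmetric by Poincaré duality. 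After Step 3, the limit inherits them.

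\emph{Step 3: semicontinuity and equal endpoints.} The endpoint is determined by $v(\det A)$. Since $\det A\in\Lambda[1/p]^\times$ up to the lattice structure, $\det A=p^{e}u$ with $u\in\Lambda^\times$. This forces $v_{\E}(\det A)=v_p(\varepsilon_\zeta(\det A))=e$, so the endpoints are the same. For ``on or above,'' I would run a Grothendieck--Katz style argument. For each $m$ and $i$, the $i$-th vertex of the Hodge polygon of $A_m$ is the minimal valuation of $i\times i$ minors. Specialization $\varepsilon_\zeta:\O_{\E}\cap\Lambda[1/\delta]\to \O_{K_n}$ does not decrease valuations, since for $f\in\Lambda$ we have $v_p(f(\zeta-1))\ge v_{\E}(f)$. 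So each specialized Hodge polygon lies on or above the generic one, and passing to the limit in $m$ gives the inequality.

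\emph{Step 4: uniform convergence, the main obstacle.} For $f\in \Lambda$ with $v_{\E}(f)=c$, write $f=p^c(f_0)$ where $f_0$ has nonzero reduction in $k[\![T]\!]$ of some $T$-order $r$. By Weierstrass preparation, $v_p(f(\zeta-1))=c+r/(p^{n-1}(p-1))\to c$. The difficulty is that the Newton polygon is a limit over $m$, and the error $r$ may grow with $m$. To control it I would use Kedlaya's result that over $\E$ there is a basis in which the Frobenius matrix is in ``Dieudonn\'e--Manin'' or Hodge--Newton-adapted form up to bounded error: after an \emph{explicit} change of basis $U$ over $\E$, we get $U^{-1}A\varphi(U)$ with Hodge polygon equal to Newton polygon, for instance after passing to $F^h$. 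The obstacle is that $U$ lives in $\E$, not in $\Lambda$. I would approximate $U$ $p$-adically by an element of $\Lambda[1/T]$, to within an error that does not change the Hodge polygon, and then invert a suitable $\delta$ (a power of $T$ times whatever is needed) with $\delta(\zeta-1)\ne0$. After this, for $n\gg0$ the specialization of one fixed matrix over $\Lambda[1/\delta]\cap\O_{\E}$ has Hodge polygon within $O(p^{-n})$ of the generic one. A single fixed matrix specialized in this way has a Newton polygon bounded below by its own Hodge polygon, so squeezing with Step 3 gives uniform convergence at rate $O(p^{-n})$.
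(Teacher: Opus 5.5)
Your Steps 1--3 are essentially sound. Step 1 and the minors comparison in Step 3 are the paper's arguments, and the endpoint computation via $\det A=p^eu$ with $u\in\Lambda^\times$ is a clean direct alternative. One correction in Step 3: apply the valuation comparison only to entries in $\Lambda$. On $\O_{\E}\cap\Lambda[1/\delta]$ specialization can lower valuations (e.g.\ $p/T$) and need not land in $\O_{K_n}$.

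\textbf{The gap is in Step 4: the squeeze does not close.} Both inequalities you combine are \emph{lower} bounds for $\NP(M_\zeta)$. Newton-above-Hodge gives $h\,\NP(M_\zeta)\ge \mathrm{HP}(\varepsilon_\zeta(B))$, which is close to $h\,\NP(M_{\E})$. Step 3 gives $\NP(M_\zeta)\ge\NP(M_{\E})$. Nothing bounds $\NP(M_\zeta)$ from \emph{above} away from the endpoints. Equal endpoints and symmetry do not help: slopes $\{0,1\}$ and $\{\tfrac12,\tfrac12\}$ share both. Nor does ``Hodge $=$ Newton'' for the generic matrix survive specialization. For the companion matrix $\bigl(\begin{smallmatrix}0&-p\\1&-T\end{smallmatrix}\bigr)$ of $F^2+TF+p$, the specialized Hodge polygon is unchanged, while for $n\ge 2$ the specialized Newton slopes become $\frac{1}{p^{n-1}(p-1)}$ and $1-\frac{1}{p^{n-1}(p-1)}$. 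So even granting the adapted basis over $\E$, its approximation over $\Lambda[1/T]$, and a suitable $\delta$, you still have no upper control on $\NP(M_\zeta)$.

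The missing ingredient is an upper bound for the Newton polygon by the Hodge polygon of an iterate $F^m$, with error independent of the base. This is Katz's basic slope estimate \cite[1.4.3]{Katz}, applied to $\wedge^i(\O_{K_n}\otimes_{\Lambda,\varepsilon_\zeta}M)$, which has rank $r_i=\binom{2d}{i}$. For $m\ge r_i$ it gives
\[
0\le \NP(M_\zeta)(i)-\tfrac{1}{m}H_{m,n}(i)\le \tfrac{i(r_i-1)}{m},
\]
where $H_{m,n}$ is the Hodge polygon of $F^m$ computed with $v_n=\ord_p\circ\varepsilon_\zeta$. This handles exactly the obstacle you identified, an error growing with $m$, because the right side does not depend on $n$. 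The argument then runs as follows:
\begin{enumerate}
\item Fix $m$ large, using also $\frac1m H_{m,\E}(i)\to\NP(M_{\E})(i)$.
\item Only then let $n\to\infty$. Here $H_{m,n}(i)\to H_{m,\E}(i)$, because $H_{m,\star}(i)$ is a minimum over finitely many $i\times i$ minors of the fixed matrix of $F^m$ over $\Lambda$, and $v_n(a)\to v_{\E}(a)$ for each $a\in\Lambda$.
\item An $\epsilon/3$ argument then gives uniform convergence.
\end{enumerate}
This needs no change of basis over $\E$, no approximation, and no localization.
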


\begin{proof}
    As $\Gal(K_n/K_0)$ acts isometrically on $K_n$, commutes with $\varphi$, and transitively permutes the roots of $\Phi_n=0$,
    it follows that 
    $\NP(M_{\zeta})$ is independent of $\zeta$.  Each $F$-isocrystal $D_n^{(0,1)}$
    has slopes in $(0,1)$ with underlying multiset of slopes stable under $s\mapsto 1-s$ by autoduality
    of the Jacobian, so Corollary \ref{cor:directsum} implies that the same
    is true of $\NP(M_{\zeta})$ for all $n$.  Since $M_{\zeta}$ is of dimension $2d$ over $K_n$ and $(0,0)$ is the initial
    point of $\NP(M_{\zeta})$, the terminal point is $(2d,d)$ by symmetry.
    
    To see that $\NP(M_{\zeta})$ lies on or above $\NP(M_{\E})$, and converges uniformly to it as $n\rightarrow \infty$,
    we adapt the proof of the Grothendieck--Katz semicontinuity theorem \cite[Theorem 15.3.2]{Kedlaya}.
    Let $v_{\E}$ denote the Gauss valuation on $\Lambda$, given by $v_{\E}(\sum a_j T^j)\coloneq \min_j \ord_p(a_j)$.
  For $n\ge 1$, let $v_{n}\coloneq \ord_p\circ \varepsilon_{\zeta}$
    with $\zeta$ any primitive $p^n$-th root of unity.  
    It follows easily from Weierstrass preparation in $\Lambda$ that
    \begin{equation}\label{eq:valbound}
        v_{n}(a) \ge v_{\E}(a)\ \text{for all}\ a\in \Lambda,\text{and}\lim_{n\rightarrow \infty} v_{n}(a) = v_{\E}(a)\ \text{for each fixed}\ a\in \Lambda.
    \end{equation}
    Denote by $H_{m,n}$ (respectively $H_{m,\E}$) the Hodge polygon of $F^m$ acting on $M$, computed using $v_{n}$ (respectively $v_{\E}$) \cite[Definition 14.5.1]{Kedlaya}.
    For each integer $i \in [1,2d]$, it follows from \eqref{eq:valbound} and the definition of the ordinate $H_{m,\star}(i)$
    as the minimum of $v_{\star}(\Delta)$ over all $i\times i$ minors $\Delta$ of the matrix of $F^m$ on $M$ (with respect to
    any fixed $\Lambda$-basis) \cite[Definition 4.3.3]{Kedlaya}
    that
    \begin{equation}\label{eq:hodgeconv}
            H_{m,n}(i) \ge H_{m,\E}(i)\ \text{for all}\ n, \text{and}\ \lim_{n\rightarrow\infty}H_{m,n}(i) =  H_{m,\E}(i).
    \end{equation}
    Now consider the difference module $\wedge^i(\O_{K_n}\otimes_{\Lambda,\varepsilon_{\zeta}}M)$, which has rank $r_i\coloneq \binom{2d}{i}$, least Newton slope $\NP(M_{\zeta})(i)\le i$, and least Hodge slope for $F^m$ equal to $H_{m,n}(i)$.
    Katz's {\em basic slope estimate}\footnote{While \cite[1.4.3]{Katz} is proved for $F$-crystals over $W$, 
    the proof works {\em mutatis mutandis} in our slightly more general context, using
\cite[Corollary 14.6.4]{Kedlaya} for the required Dieudonn\'e--Manin decomposition, and \cite[Corollary 14.5.4]{Kedlaya}
for the ``Newton above Hodge'' inequality.}
    \cite[1.4.3]{Katz} yields:
    \begin{equation}\label{eq:Katzslope}
            0 \le \NP(M_{\zeta})(i) - \frac{1}{m}H_{m,n}(i) \le \frac{i(r_i-1)}{m}
    \end{equation}
    for all $m \ge r_i$.  The key point is that the right hand side is {\em independent} of $n$.
    On the other hand, for each $i$ with $0\le i\le2d$, \cite[Proposition 14.5.8]{Kedlaya} gives
    \begin{equation}\label{eq:Kedlayaslope}
        \lim_{m\rightarrow \infty} \frac{1}{m} H_{m,\E}(i) = \NP(M_{\E})(i).
    \end{equation}
    For each $i$ and $\epsilon>0$, we may therefore choose $m$ so that 
    $|\NP(M_{\zeta})(i)-\frac{1}{m}H_{m,n}(i)|< \epsilon/3$ for all $n$ and $|\NP(M_{\E})(i)-\frac{1}{m}H_{m,\E}(i)|< \epsilon/3$.
    It follows from \eqref{eq:hodgeconv} that for all $n$ sufficiently large we have
    $|\frac{1}{m}H_{m,n}(i)-\frac{1}{m}H_{m,\E}(i)| < \epsilon/3$, and the triangle inequality then yields
    $$
        |\NP(M_{\zeta})(i)-\NP(M_{\E})(i)| < \epsilon.
    $$
    As we need only consider the finitely many integers $i\in [0,2d]$ and $\NP(M_{\star})$ is linear
    on each interval $[i,i+1]$, we conclude that $\NP(M_{\zeta})$ converges uniformly to $\NP(M_{\E})$ as $n\rightarrow \infty$.
    Moreover, dividing the Hodge polygon bound in \eqref{eq:hodgeconv} by $m$, taking the limit as $m\rightarrow \infty$
    and using \eqref{eq:Katzslope} and \eqref{eq:Kedlayaslope}, we deduce that $\NP(M_{\zeta})$ lies on or above $\NP(M_{\E})$ for all $\zeta$.
    Finally, since $\NP(M_{\zeta})$ has all slopes in $(0,1)$ with underlying slope multiset stable under $s\rightarrow 1-s$ and converges uniformly to $\NP(M_{\E})$, we conclude that $\NP(M_{\E})$ has all slopes in $[0,1]$
    and underlying slope multiset stable under $s\mapsto 1-s$; as its initial point is $(0,0)$,
    it has terminal point $(2d,d)$ by symmetry.
\end{proof}

\begin{rem}
    It follows from \cite[Theorem B (i)]{ID} that one has a natural isomorphism of difference modules $M_{\E}\simeq \E(1)\otimes_{\E} M_{\E}^{\vee}$
    where $M_{\E}^{\vee}$ is the
    {\em dual} in the sense of \cite[Definition 2.6]{ID} and $\E(1)\coloneq \E\cdot v$ is the rank-1
    difference module over $\E$ with Frobenius determined by $Fv=pv$.  This yields another proof that the underlying
    slope multiset of $\NP(M_{\E})$ is stable under $s\mapsto 1-s$.
\end{rem}

\section{Cyclic vectors}\label{sec:cyclic}

In this section, we will establish an explicit description of the Newton polygon 
  of the specialization $M_{\zeta}$ in terms of a ``universal'' polynomial $P$ with coefficients
  in an appropriate localization of $\Lambda$. Together with 
  Theorem \ref{thm:NPabstract}, this will complete the proof of
Theorem \ref{thm:main-intro}.
If $d=0$, Theorem \ref{thm:main-intro} and Corollary \ref{cor:main}
hold with $P=\delta=1$, so throughout this section we assume $d>0$.

Recall \cite[Definition 14.2.1]{Kedlaya} that for a difference ring
$(R,\varphi)$, the {\em twisted polynomial ring} $R\{F\}$ is the ring of finite
formal sums $\sum_{i\ge0}r_iF^i$, with multiplication determined by
$Fr=\varphi(r)F$. 
Given $P\in R\{F\}$, the set $R\{F\}P$ is a left ideal of $R\{F\}$, and the $R$-module $R\{F\}/R\{F\}P$
is a difference module over $R$ with Frobenius given by left multiplication by $F$.
For a finite free difference module $M$ of rank $r$ over $R$,
we call $m\in M$ a \emph{cyclic vector} if
$m,Fm,\ldots,F^{r-1}m$ form an $R$-basis of $M$.
Equivalently, the map $R\{F\}\to M$ sending $1$ to $m$ induces
an isomorphism of difference modules $R\{F\}/R\{F\}P\simeq M$ for a monic $P$ of
degree $r$. This agrees with Kedlaya's definition when $R$ is
an inversive integral domain. More generally, for an $R$-algebra
$A$, we say that $m$ is a \emph{cyclic vector over $A$} if
$1\otimes m,\ldots,1\otimes F^{r-1}m$ form an $A$-basis of
$A\otimes_R M$.

\begin{lem}\label{kedlem}
        Let $(K,\varphi)$ be a difference field.  Assume there exists $\alpha\in K$ satisfying:
        \begin{enumerate}
            \item $\varphi(\alpha)=\alpha$
            \item $\alpha$ is not a root of unity
            \item $\varphi(x)=\alpha x$ admits a nonzero solution $x\in K$. 
        \end{enumerate}
        Then every finite dualizable difference module
        over $K$ admits a cyclic vector.
\end{lem}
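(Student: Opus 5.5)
This is the cyclic vector theorem for difference modules, and I would adapt the classical Katz-style argument for differential modules, following Kedlaya's treatment of cyclic vectors in \cite[\S14.3]{Kedlaya}. The plan is to take the fixed nonzero solution $x$ of $\varphi(x)=\alpha x$ and use multiplication by powers of $x$ to play the role of the coordinate $t$ in the differential setting. Let $N$ be a finite dualizable difference module of dimension $r$ over $K$. Dualizability means $F$ is bijective on $N$ (its matrix is invertible), so $F$ is a $\varphi$-semilinear automorphism. For $m\in N$, write $W(m)$ for the $K$-span of $m,Fm,F^2m,\ldots$. Then $W(m)$ is $F$-stable and equals $\mathrm{span}(m,\ldots,F^{s-1}m)$, where $s$ is the first index with $F^sm$ dependent on the earlier iterates. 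We want some $m$ with $s=r$.

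I would argue by a maximality argument. Choose $m$ with $s=\dim W(m)$ maximal, and suppose $s<r$. Pick $n\notin W(m)$ and consider the candidates $m_\lambda=m+\lambda x^j n$ for $\lambda$ in the constant field $K^{\varphi}$ and integers $j\ge 0$. The key computation is
\[
F^i(x^jn)=\varphi^i(x^j)F^in=\alpha^{ij}x^jF^in,
\]
using $\varphi(\alpha)=\alpha$. Form the $K$-wedge $\omega(\lambda,j)=m_\lambda\wedge Fm_\lambda\wedge\cdots\wedge F^sm_\lambda\in\wedge^{s+1}N$. By maximality of $s$, it vanishes for every $\lambda,j$. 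Expanding in $\lambda$, the coefficient of $\lambda^1$ is
\[
\sum_{i=0}^{s}\alpha^{ij}\,x^j\,\bigl(m\wedge\cdots\wedge F^in\wedge\cdots\wedge F^sm\bigr),
\]
where $F^in$ replaces $F^im$. (One needs $K^\varphi$ to be infinite to separate the powers of $\lambda$ via a Vandermonde argument. The constants contain $\alpha$, and $\alpha$ has infinite order, so $\mathbf{Q}(\alpha)\subseteq K^\varphi$ is infinite if the characteristic is $0$, or contains infinitely many distinct powers of $\alpha$ otherwise. Using the values $\lambda=\alpha^k$ suffices.)

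Dividing by $x^j$, we get $\sum_i\alpha^{ij}w_i=0$ for all $j$, where
\[
w_i=m\wedge\cdots\wedge F^in\wedge\cdots\wedge F^sm .
\]
Since the $\alpha^i$ for $0\le i\le s$ are distinct (as $\alpha$ is not a root of unity), a Vandermonde argument over $j=0,\ldots,s$ gives $w_i=0$ for every $i$. In particular $w_s=m\wedge\cdots\wedge F^{s-1}m\wedge F^sn=0$, so $F^sn\in W(m)$ for every $n$. Since $F^s$ is bijective and $W(m)$ is $F$-stable of dimension $s$, this forces $N=F^{-s}(W(m))=W(m)$, contradicting $s<r$. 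This is where dualizability enters.

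The main obstacle is bookkeeping. First, the top exterior power computation must be done relative to $F$ being semilinear. Second, the genuinely delicate point is extracting the linear-in-$\lambda$ coefficient. I would handle it by noting that $\omega(\lambda,j)$ is a polynomial in $\lambda$ of degree at most $s+1$ with coefficients in $\wedge^{s+1}N$, since $F^i(\lambda\cdot)=\lambda F^i(\cdot)$ for constants $\lambda$. Vanishing at infinitely many constants $\lambda$ then kills every coefficient.
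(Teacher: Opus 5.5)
Your proof is correct and is essentially the adaptation of Kedlaya's maximality proof of the differential cyclic vector theorem that the paper invokes via \cite[Theorem 5.4.2]{Kedlaya}: twist by $x^j$, extract the $\lambda$-linear coefficient of the vanishing wedge, and apply Vandermonde in the distinct constants $\alpha^i$. One slip: dualizability makes $F$ bijective only when $\varphi$ is surjective, but in general it still means $F$ carries a $K$-basis to a $K$-basis, so the $K$-span of $F^s(N)$ is $N$ and your final contradiction goes through unchanged.
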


\begin{proof}
    Adapt the proof of \cite[Theorem 5.4.2]{Kedlaya} (see \cite[Chapter 14, Exercise (3)]{Kedlaya}).
\end{proof}

\begin{cor}\label{cor:cyclic}
    Any finite dualizable difference module over $K_n$ admits a cyclic vector.
\end{cor}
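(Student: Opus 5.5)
We will deduce the corollary from Lemma \ref{kedlem} applied to the difference field $(K_n,\varphi)$, where $\varphi$ acts as the Witt vector Frobenius on $K_0$ and trivially on $\mu_{p^n}$. So it suffices to produce an $\alpha\in K_n$ satisfying (i)--(iii). We take $\alpha\in K_0$, for instance $\alpha=p$ or any element of $\Z_p$ that is not a root of unity.

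Conditions (i) and (ii) are immediate for $\alpha=p$. Indeed, $\varphi$ fixes $\Q_p\subseteq K_0$, hence fixes $p$, and $|p|_p<1$, so $p$ is not a root of unity.

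The only real content is (iii): we must find a nonzero $x\in K_n$ with $\varphi(x)=px$. We look for $x\in K_0$, where this is a question about $\varphi$-semilinear equations over $W=W(k)$ with $k$ algebraically closed. Write $x=p\,u$ with $u\in W$ a unit. Then $\varphi(x)=px$ becomes $\varphi(u)=pu$, and comparing valuations shows this fails for a unit $u$. So $\alpha=p$ does not work directly. We instead choose $\alpha$ to be a unit of $\Z_p$ that is not a root of unity, for example $\alpha=1+p$ when $p$ is odd and $\alpha=5$ when $p=2$. Then $\varphi(\alpha)=\alpha$, and $\alpha$ is not a root of unity because $\Z_p^\times$ has finite torsion and $\alpha$ is not torsion.

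For (iii) we solve $\varphi(u)=\alpha u$ with $u\in W^\times$ by successive approximation. Modulo $p$ the equation reads $\bar u^p=\bar\alpha\bar u$, and this has a nonzero solution $\bar u=\bar\alpha^{1/(p-1)}\in k^\times$ since $k$ is algebraically closed. Suppose $u_m\in W^\times$ satisfies $\varphi(u_m)\equiv\alpha u_m \pmod{p^m}$. Set $u_{m+1}=u_m+p^m y$. The error term modulo $p^{m+1}$ then gives a condition of the form $y^p-\bar\alpha\bar y\equiv c\pmod p$, that is, an Artin--Schreier-type equation $\bar y^p-\bar\alpha\bar y=\bar c$ over $k$. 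This equation is separable of positive degree, so it has a root in the algebraically closed field $k$. The limit $u=\lim u_m\in W^\times$ then solves $\varphi(u)=\alpha u$. Equivalently, this is the standard fact (Lang's theorem / Dieudonn\'e--Manin) that a rank-one unit-root $F$-crystal over $W(k)$ with $k$ algebraically closed is trivial. Its image in $K_0\subseteq K_n$ is a nonzero element $x=u$ with $\varphi(x)=\alpha x$.

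With (i)--(iii) verified, Lemma \ref{kedlem} gives that every finite dualizable difference module over $K_n$ admits a cyclic vector. The main obstacle is step (iii). It is the one place where algebraic closedness of $k$ is used, through solvability of the Artin--Schreier-type equations at each stage of the approximation.
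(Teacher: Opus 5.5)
Your argument is correct and is essentially the paper's. The paper also applies Lemma \ref{kedlem} with $\alpha=1+p\in\Z_p^{\times}$ of infinite order, and gets (iii) from the surjectivity of $x\mapsto\varphi(x)/x$ on $W^{\times}$ for algebraically closed $k$, or alternatively from Dieudonn\'e--Manin for the slope-zero rank-one isocrystal; your successive-approximation argument proves that surjectivity directly. You can delete the opening detour through $\alpha=p$, and you need no special case for $p=2$, since $1+p=3\neq\pm1$ already has infinite order in $\Z_2^{\times}$.
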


\begin{proof}
    By Lemma \ref{kedlem}, it suffices to exhibit $\alpha \in K_n$
    satisfying the three conditions.  This is straightforward: choose  
    $\alpha\in \Z_p^{\times}$ of infinite order
    (e.g.~$\alpha=1+p$), and note that $x\mapsto \varphi(x)/x$ is surjective on $W^{\times}$
    as $k$ is algebraically closed.  Alternatively, the $F$-isocrystal $K_0\cdot v$ with $Fv=\alpha^{-1}\cdot v$
    has slope zero, so admits an $F$-fixed generator $x\cdot v$ by the Dieudonn\'e--Manin classification.
\end{proof}

\begin{thm}\label{cyclic}
    There exist $m\in M$ and $h\in\Lambda$, with
    $h\notin\Phi_n\Lambda$ for all $n\ge0$, such that
    $m$ is a cyclic vector over $\Lambda[1/h]$.
\end{thm}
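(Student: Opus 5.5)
Fix a $\Lambda$-basis $e_1,\ldots,e_{2d}$ of $M$ (Theorem \ref{IDmain}). For $m\in M$, write $\Delta(m)\in\Lambda$ for the determinant of the matrix expressing $m,Fm,\ldots,F^{2d-1}m$ in this basis. Then $m$ is a cyclic vector over $\Lambda[1/h]$ exactly when $\Delta(m)$ is a unit in $\Lambda[1/h]$, so we simply take $h=\Delta(m)$. The task therefore reduces to finding a single $m\in M$ with $\Delta(m)\notin\Phi_n\Lambda$ for every $n\ge 0$. Since $\Phi_n$ is irreducible of degree $p^{n-1}(p-1)$ (or $\Phi_0=T$), the condition $\Delta(m)\in\Phi_n\Lambda$ is equivalent to $\varepsilon_\zeta(\Delta(m))=0$ for a (any) primitive $p^n$-th root of unity $\zeta$. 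This in turn says that the image $m_\zeta$ of $m$ in $M_\zeta$ fails to be a cyclic vector of $M_\zeta$, because $\varepsilon_\zeta(\Delta(m))$ is the determinant of $m_\zeta,\ldots,F^{2d-1}m_\zeta$ in the induced basis. For $n=0$ we use $K_0$ with $\zeta=1$, where $M_1$ is $D_0^{(0,1)}$, up to the identification of Theorem \ref{IDmain}.

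We use a Baire category argument on the compact (hence complete) space $M\simeq\Lambda^{2d}$ with its $(p,T)$-adic topology. For each $n$, let
\[
Z_n\coloneq\{m\in M:\ \Delta(m)\in\Phi_n\Lambda\}.
\]
The map $m\mapsto\varepsilon_\zeta(\Delta(m))\in\O_{K_n}$ is continuous, because $\Delta$ is polynomial in the coordinates of $m$ and of its $\varphi$-twists, and $\varphi$ is continuous. Hence $Z_n$ is closed. It remains to show that $Z_n$ has empty interior; a countable union of nowhere dense closed sets cannot then exhaust $M$, and any $m\notin\bigcup_n Z_n$ works.

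To show $Z_n$ has empty interior, suppose it contains $m_0+\mathfrak m^N M$, where $\mathfrak m=(p,T)$. By Corollary \ref{cor:cyclic}, $M_\zeta$ has a cyclic vector $c$. Since $M\to M_n=M/\rho_nM\to M_\zeta$ has image an $\O_{K_n}$-lattice spanning $M_\zeta$, after scaling by a power of $p$ we may take $c$ in the image of $M$, say $c=\varepsilon_\zeta(m_1)$. Now consider the function
\[
f(t)\coloneq\varepsilon_\zeta\bigl(\Delta(m_0+t\,p^N m_1)\bigr),\qquad t\in W.
\]
Since $F$ is $\varphi$-semilinear, $F^i(t\,m_1)=\varphi^i(t)F^i m_1$, so $f$ is a polynomial expression in $t,\varphi(t),\ldots,\varphi^{2d-1}(t)$. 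It is therefore not a polynomial in $t$, and this is where I expect the main obstacle. I would handle it by restricting to $t\in\Z_p$, where $\varphi$ acts trivially. Then $f$ becomes an honest polynomial in $t$ of degree at most $2d$, whose $t^{2d}$ coefficient is $p^{2dN}\varepsilon_\zeta(\Delta(m_1))=p^{2dN}\det(c,\ldots,F^{2d-1}c)\ne0$. Such a nonzero polynomial has only finitely many roots, so some $t\in\Z_p$ gives $f(t)\ne0$, contradicting $m_0+tp^Nm_1\in Z_n$. Hence $Z_n$ is nowhere dense.

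Thus a suitable $m$ exists, and $h=\Delta(m)$ satisfies $h\notin\Phi_n\Lambda$ for all $n\ge0$. Moreover $m,\ldots,F^{2d-1}m$ is a basis of $\Lambda[1/h]\otimes M$, as required.
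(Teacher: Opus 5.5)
Your proposal is correct and is essentially the paper's argument: a Baire category argument on $M$ using the determinant $\Delta(m)$. Density comes from perturbing by $t\,p^N$ times a scaled lift of a cyclic vector of the specialization, with $t$ restricted to $\Z_p$ so that $\Delta$ becomes an honest polynomial in $t$. The differences are cosmetic: you use the compact $(p,T)$-adic topology instead of the $p$-adic one, and you test non-vanishing in $K_n$ via $\varepsilon_\zeta$ rather than in the domain $\Lambda/\Phi_n\Lambda$.
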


\begin{proof}
    Fix a $\Lambda$-basis of $M$, and for $m\in M$ define
    $$
        \Delta(m)\coloneq \det(m,Fm,\ldots,F^{2d-1}m)\in \Lambda
    $$
    computed with respect to the fixed choice of basis.
    For $n\ge 0$, set 
    $$
        U_n\coloneq \left\{m \in M\ :\ \Delta(m)\not\in \Phi_n\Lambda\right\}.
    $$
    Equipping $M$ with the $p$-adic topology, for which it is a complete metric space, we claim that $U_n$ is an open and dense
    subset of $M$.  First observe that $(M/\Phi_nM)[1/p]$ is a finite, free, dualizable difference module over $(\Lambda/\Phi_n\Lambda)[1/p]\simeq K_n$.
    By Corollary \ref{cor:cyclic}, it admits a cyclic vector. Multiplying such a cyclic vector by a sufficiently high power of $p$,
    we may thereby choose $y\in M$ whose image in $M/\Phi_nM$ is a cyclic vector after inverting $p$, so $\Delta(y)\not\in \Phi_n\Lambda$
    and $y\in U_n$.  For $N\ge 0$, $w\in M$, and $t\in \Z_p$, we compute that
    $\Delta(w+p^Nty)$ is a polynomial in $\Lambda[t]$ of degree $2d$ with leading term
    $
        p^{2dN}t^{2d}\Delta(y)\not\equiv 0 \bmod \Phi_n.
    $
    In particular, this polynomial is nonzero and 
its reduction modulo $\Phi_n$ has only finitely many roots in the integral domain $\Lambda/\Phi_n\Lambda$.    
    As $\Z_p$ is infinite, there exists $t\in \Z_p$ so that $\Delta(w+p^Nty)\not\equiv 0 \bmod \Phi_n$
    and we conclude that every $p$-adic neighborhood of $w$ meets $U_n$.  Thus, $U_n$ is dense in $M$.
    Since $\Lambda/\Phi_n\Lambda$ is finite and free over $W$, it is $p$-adically Hausdorff, so the complement
    of zero is open; as the map $m\mapsto\Delta(m)\bmod\Phi_n$ is $p$-adically continuous, $U_n$ is open.
  By the Baire category theorem applied to the complete metric
    space $M$, the intersection $\bigcap_{n\ge 0} U_n$ is {\em nonempty}.  We may therefore choose $m$ in this intersection,
    and we set $h\coloneq \Delta(m)$.  By construction, $h\not\in \Phi_n\Lambda$ for all $n\ge 0$, and 
    $m$ is a cyclic vector over $\Lambda[1/h]$.
\end{proof}

\begin{cor}\label{cor:maincyclic}
    There exist $\delta\in W[T]$ with $\delta(\zeta-1)\neq 0$ for every $p$-power root of unity $\zeta\in \o{K}_0$,
    and a monic polynomial 
    $P$ of degree $2d$ with coefficients in $\Lambda[1/\delta]\cap \O_{\E} \subseteq \E$
    such that 
    $$
        \NP(M_{\zeta}) = \NP_p(P\big|_{T=\zeta-1})
    $$
    for all $n\ge 1$ and each primitive $p^n$-th root of unity $\zeta$.
\end{cor}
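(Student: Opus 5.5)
The plan is to take the cyclic vector $m$ and the element $h\in\Lambda$ from Theorem \ref{cyclic}, read off $P$ from the linear relation expressing $F^{2d}m$ in terms of $m,\ldots,F^{2d-1}m$, and then compare Newton polygons of difference modules with Newton polygons of twisted polynomials. The one external input is Kedlaya's result identifying the Newton polygon of a cyclic difference module with the Newton polygon of its twisted polynomial.

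\emph{Step 1: the polynomial and $\delta$.} Since $m$ is a cyclic vector over $\Lambda[1/h]$, we can write
$$F^{2d}m=\sum_{i<2d}c_iF^im\quad\text{with } c_i\in\Lambda[1/h].$$
Put $P\coloneq Z^{2d}-\sum_i c_iZ^i$, which we view in $\Lambda[1/h]\{F\}$. By Weierstrass preparation, $h=p^a u\,q$ with $u\in\Lambda^\times$ and $q\in W[T]$ distinguished, and we set $\delta\coloneq q$. Then $\Lambda[1/h]\subseteq\Lambda[1/p,1/\delta]$. For $n\ge1$ and $\zeta$ a primitive $p^n$-th root of unity, $h\notin\Phi_n\Lambda$ gives $h(\zeta-1)\neq0$, hence $\delta(\zeta-1)\neq0$. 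The case $n=0$, i.e.\ $\zeta=1$, follows the same way from $h\notin T\Lambda$. Since $\delta\equiv T^r \bmod p$, the element $\delta$ is a unit in $\O_{\E}$, so $\Lambda[1/h]\subseteq\E$.

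\emph{Step 2: the Newton polygon identities.} The map $1\mapsto m$ identifies $\E\{F\}/\E\{F\}P$ with $M_{\E}$. Applying $\varepsilon_\zeta$, which extends to $\Lambda[1/p,1/\delta]\to K_n$ because $\delta(\zeta-1)\ne0$, identifies $K_n\{F\}/K_n\{F\}P|_{T=\zeta-1}$ with $M_\zeta$. Here we use that $m$ stays cyclic after specialization, since $h(\zeta-1)\neq0$. We then invoke Kedlaya's computation of the Newton polygon of $R\{F\}/R\{F\}P$ for a complete inversive difference field (\cite[\S14.5]{Kedlaya}). This requires checking that his polygon convention matches ours, namely the lower convex hull of the points $(r-i,v(a_i))$. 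It yields both $\NP(M_\zeta)=\NP_p(P|_{T=\zeta-1})$ and $\NP(M_{\E})=\NP_{\E}(P)$.

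\emph{Step 3: integrality of the coefficients.} By Theorem \ref{thm:NPabstract}, $\NP_{\E}(P)$ starts at $(0,0)$ and has all slopes $\ge0$. Hence every coefficient $c_i$ has $v_{\E}(c_i)\ge0$, so $c_i\in\O_{\E}$. It remains to show $\Lambda[1/p,1/\delta]\cap\O_{\E}=\Lambda[1/\delta]$. Take $x=y/(p^b\delta^c)\in\O_{\E}$ with $y\in\Lambda$. Because $\delta$ is a unit in $\O_{\E}$, we get $y\in p^b\O_{\E}\cap\Lambda=p^b\Lambda$, using that $v_{\E}$ restricts to the Gauss valuation on $\Lambda$. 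Therefore $x\in\Lambda[1/\delta]$.

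\emph{Expected obstacle.} The main point to verify is that Kedlaya's Newton-polygon/twisted-polynomial comparison applies to both difference fields $\E$ and $K_n$ with the stated normalizations. The rest is bookkeeping with Weierstrass preparation.
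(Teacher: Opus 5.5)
Your proposal is correct and follows essentially the paper's route: you take $m,h$ from Theorem \ref{cyclic}, read $P$ off the cyclic relation, extract $\delta$ by Weierstrass preparation, apply Kedlaya's comparison (\cite[Proposition 14.5.7]{Kedlaya}) to both $M_\zeta$ and $M_\E$, and deduce integrality from the fact that $\NP_\E(P)$ lies on or above the axis by Theorem \ref{thm:NPabstract}. The only difference is that you drop the factor $p^a$ from $\delta$, whereas the paper takes $\delta=p^aq$, the full polynomial part of $h$; this costs you the extra check $\Lambda[1/p,1/\delta]\cap\O_\E=\Lambda[1/\delta]$, but gives the slightly sharper conclusion that the coefficients lie in $\Lambda[1/\delta]$, which is already contained in $\O_\E$.
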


\begin{proof}
By Theorem \ref{cyclic}, we may choose $m\in M$ and $h\in\Lambda$ with
$h\notin\Phi_n\Lambda$ for all $n\ge0$ such that $m$ is a cyclic vector
over $\Lambda[1/h]$. The resulting monic relation among
$m,Fm,\ldots,F^{2d}m$ defines a polynomial $P$ of degree $2d$
with coefficients in $\Lambda[1/h]$.
By Weierstrass preparation, we may write $h=\delta u$ with
$\delta\in W[T]$ and $u\in\Lambda^\times$.
Then $P$ has coefficients in $\Lambda[1/\delta]$, and
$\delta(\zeta-1)\ne0$ for every $p$-power root of unity $\zeta$,
since $h\notin\Phi_n\Lambda$ for all $n\ge0$.
For each primitive $p^n$-th root of unity $\zeta$, the map
$\varepsilon_\zeta:\Lambda\to K_n$ factors through $\Lambda[1/\delta]$, and we
obtain an isomorphism of difference modules 
\[
M_\zeta\simeq
K_n\{F\}/K_n\{F\}(P\big|_{T=\zeta-1})
\]
over $K_n$.  Likewise, extending scalars to $\E$ gives an isomorphism of difference modules over $\E$
\[
M_\E\simeq \E\{F\}/\E\{F\}P.
\]
By \cite[Proposition 14.5.7]{Kedlaya}, these isomorphisms yield
\[
\NP(M_\zeta)=\NP_p(P\big|_{T=\zeta-1}),
\qquad
\NP(M_\E)=\NP_\E(P).
\]
Finally, Theorem \ref{thm:NPabstract} shows that $\NP_\E(P)$
lies on or above the horizontal axis. By the definition of the
Newton polygon as a lower convex hull, the valuation of each
nonzero coefficient is at least the ordinate of the polygon
at the corresponding abscissa, and is therefore nonnegative.
Thus $P$ has coefficients in $\Lambda[1/\delta]\cap\O_\E$.
\end{proof}

\begin{proof}[Proof of Theorem \ref{thm:main-intro} and Corollary \ref{cor:main}]
  Combining Corollary \ref{cor:directsum}, Theorem \ref{thm:NPabstract}, and Corollary \ref{cor:maincyclic} gives Theorem \ref{thm:main-intro}.  To prove Corollary \ref{cor:main}, write $P=\sum_{0\le i \le 2d} a_{i}F^i$ with $a_i\in \Lambda[1/\delta]\cap \O_{\E}$ and $a_{2d}\coloneq 1$. 
  By Weierstrass preparation, for each $i\in I\coloneq \{i\ :\ a_i\neq 0\},$ we may write
 $$
    a_i = p^{\alpha_i} \frac{H_i}{G_i}u_i
 $$
 with $\alpha_i \in \Z_{\ge 0}$, $u_i\in \Lambda^{\times}$, and $H_i,G_i \in W[T]$ monic distinguished polynomials.  
 Writing $\Delta_i\coloneq \deg(H_i)-\deg(G_i)$ and $e_n\coloneq p^{n-1}(p-1)$,
 it follows that for all $n\gg0$, the $p$-adic Newton polygon of $P\big|_{T=\zeta-1}$
 is the lower convex hull of the points $q_i(n)\coloneq (2d-i,\alpha_{i}+ e_n^{-1}\Delta_{i})$ for $i\in I$.  
 The indices of the vertices of this lower convex hull are independent of $n$ for $n\gg0$: 
 indeed, for $i,j,k\in I$ with $j < i < k$, the signed vertical displacement of the point $q_{i}(n)$
 from the line through $q_{j}(n)$ and $q_{k}(n)$ is visibly an {\em affine} function of $e_n^{-1}$ with rational
 coefficients, so each of the finitely many such displacements is either identically zero or has a fixed sign for all sufficiently large $n$. We conclude that for all $n\gg0$, the $i$-th ordered slope of $\NP_p(P\big|_{T=\zeta-1})$ is of the form
 $\lambda_i+e_n^{-1}b_i$ with $\lambda_i,b_i\in \Q$ for $i=1,\ldots,2d$.  
 Since $\NP_p(P\big|_{T=\zeta-1})$ converges uniformly to $\NP_{\E}(P)$
 as $n\rightarrow \infty$, we must have $\lambda_i=s_i$, the $i$-th ordered slope of the generic Newton polygon $\NP_{\E}(P)$.
 As both the generic and specialized Newton polygons have underlying multiset of slopes that is stable under 
 $s\mapsto 1-s$, we have $s_{2d+1-i}=1-s_i$ and $b_{2d+1-i}=-b_i$ for $i=1,\ldots,d$.  Finally, the fact that  
 $\NP_p(P\big|_{T=\zeta-1})$
 lies on or above $\NP_{\E}(P)$ with the same endpoints yields $\sum_{i=1}^j b_i \ge 0$ for $j=1,\ldots,d$
 and our fixed ordering of the specialized slopes forces
 $b_{i+1}\ge b_i$ whenever $s_{i+1}=s_i$ for some $i$ with $1\le i < d$.
\end{proof}

\bibliographystyle{amsalpha}
\bibliography{iw}

\end{document}